\documentclass[envcountsame]{llncs}
\usepackage{amsmath}
\usepackage{amssymb}
\usepackage{enumitem}

\def\squareforqed{\hbox{\rlap{$\sqcap$}$\sqcup$}}
\def\qed{\ifmmode\squareforqed\else{\unskip\nobreak\hfil
\penalty50\hskip1em\null\nobreak\hfil\squareforqed
\parfillskip=0pt\finalhyphendemerits=0\endgraf}\fi}

\def\lsigma#1{{\Sigma}^0_{#1}}
\def\lpi#1{{\Pi}^0_{#1}}

\def\QCB{{{\bf QCB}_0}}
\def\IN{{\mathbb N}}

\def\calC{{\mathcal{C}}}
\def\calD{{\mathcal{D}}}
\def\calE{{\mathcal{E}}}
\def\calF{{\mathcal{F}}}
\def\calG{{\mathcal{G}}}

\def\calP{{\mathcal{P}}}

\def\I#1{{{\mathrm{\mathbf I}}(#1)}}

\newcommand{\ACA}{\mathsf{ACA_0}}

\newcommand{\cntSets}{{\mathsf{ODS}}}

\def\QCB{{{\bf QCB}_0}}

\newcommand{\Mor}{\mathsf{Mor}}
\newcommand{\Obj}{\mathsf{Obj}}

\def\src{{\sigma}}
\def\tar{{\tau}}
\def\id{{\iota}}

\def\act{{\mathrm{\bold{Act}}}}

\def\sierp{{\mathbb S}}

\def\int{{\mathrm{\mathbf{int}}}}

\def\wayabovearrow{\rlap{\raise-.25ex\hbox{$\shortuparrow$}}\raise.25ex\hbox{$\shortuparrow$}}
\def\waybelowarrow{\rlap{\raise.25ex\hbox{$\shortdownarrow$}}\raise-.25ex\hbox{$\shortdownarrow$}}

\begin{document}

\title{A note on computable \'{e}tale spaces}
\author{Matthew de Brecht\thanks{The author thanks Ruiyuan Chen for helpful discussions. This note contains proofs of the main results of the author's presentation at the 2025 RIMS workshop on proof theory and computability theory, which was organized by T.~Kihara. This work was supported by JSPS Bilateral Program Number JPJSBP120265001.}}
\institute{Graduate School of Human and Environmental Studies, Kyoto University, Japan\\
\email{matthew@i.h.kyoto-u.ac.jp}}

\maketitle

\begin{abstract}
An \'{e}tale space over a topological space $Y$ is defined as a local homeomorphism from a topological space $X$ into $Y$. They often come up in topos theory because of the equivalence between sheaves and \'{e}tale spaces over a space. In this note, we define computable \'{e}tale spaces over a computable topological space $Y$ within the TTE framework of computable topology, and show they are naturally equivalent to computable functions from $Y$ to $\cntSets$, the effective quasi-Polish category of overt-discrete quasi-Polish spaces. More generally, if $\calC$ is a computable category (or groupoid), then there is an equivalence between computable functors from $\calC$ to $\cntSets$, and computable \'{e}tale spaces equipped with a computable action by $\calC$. 
\end{abstract}


\section{Introduction}

We recommend \cite{V07} for an intuitive explanation of the connections between Grothendiek toposes and geometric logic. Briefly, frames (complete lattices satisfying a certain infinitary distributive law) correspond to the Lindenbaum-algebras of propositional geometric theories, and Grothendiek toposes in a sense correspond to the ``Lindenbaum algebras'' of predicate geometric theories. A famous result by Joyal and Tierney \cite{JT84} shows that every Grothendiek topos is equivalent to the category of continuous actions of a localic groupoid.

An analogous connection between $\omega_1$-pretoposes and $\omega_1$-coherent theories was established by R.~Chen in \cite{Chen19,Chen25}. An $\omega_1$-pretopos is a generalization of a Grothendiek topos where only countable coproducts are required to exist, rather than all small coproducts. Similarly, $\omega_1$-coherent logic is the restriction of geometric logic that requires infinitary disjunctions to be countable. R.~Chen showed that the syntactic $\omega_1$-pretopos (the ``Lindenbaum algebra'') of a countable $\omega_1$-coherent theory is equivalent to the category $\act(\calC)$ of (countably based) continuous actions of a quasi-Polish groupoid $\calC$. This can be viewed as a  generalization (from propositional logic to predicate logic) of R.~Heckmann's spatiality result \cite{H15}, which implies the Lindenbaum algebra of a countable $\omega_1$-coherent propositional theory is isomorphic to the frame of opens of a quasi-Polish space.

This note is a contribution towards effectivizing R.~Chen's result and other aspects of Grothendiek topos theory. We define computable \'{e}tale spaces over a computable topological space $Y$ within the TTE framework of computable topology \cite{W00}, and show they are equivalent to computable functions from $Y$ to $\cntSets$, the effective quasi-Polish category of overt discrete quasi-Polish spaces. More generally, we show that for any computable category $\calC$ there is a computability preserving equivalence between the category $\act(\calC)$ of (countably based) \'{e}tale spaces with continuous actions by $\calC$, and the functor category $\cntSets^\calC$ of (topologically) continuous functors from $\calC$ to $\cntSets$. It follows that the syntactic $\omega_1$-pretopos of a countable $\omega_1$-coherent theory is equivalent to a functor category $\cntSets^\calC$ for some quasi-Polish category $\calC$. These syntactic $\omega_1$-pretoposes are analogous to classifying (Grothendiek) toposes of geometric theories, and the categorically equivalent functor categories $\cntSets^\calC$ are suitable for computability theory because they are internal categories of $\QCB$.

See \cite{HMMM17,MPSS18,dbr21b,FNPPV25} for related work applying computability theory to category theoretical structures. See also \cite{K23} for connections between Lawvere-Tierney topologies and computability theory. We mainly work with computable topological spaces\footnote{Readers uninterested in computability can interpret ``computable topological space'' as ``countably based space'', and ``computable function'' as ``continuous function''.}, but will need $\QCB$-spaces \cite{Sch21} for functor categories. One could also generalize further to represented spaces \cite{P16}, which appears necessary to internalize some categories of interest \cite{dbr21b}. We will also emphasize the constructions for quasi-Polish spaces, because they are exactly what is needed to establish the connections with $\omega_1$-coherent logic. By encoding quasi-Polish spaces as spaces of ideals and using the techniques of \cite{mummert:phd}, most of our results can be formalized within the subsytem $\ACA$ of second order arithmetic (or even weaker systems).

\section{Preliminaries}

\subsection{Computable topological spaces}

Let $\prec$ be a transitive relation on $\IN$. A subset $I\subseteq \IN$ is an \emph{ideal} if and only if:
\begin{enumerate}
\item
$I \not=\emptyset$,\hfill (\emph{$I$ is non-empty})
\item
$(\forall a \in I) (\forall b \in \IN)\, (b \prec a \Rightarrow b \in I)$,\hfill (\emph{$I$ is a lower set})
\item
$(\forall a,b \in I)(\exists c\in I)\, (a \prec c  \,\&\,  b \prec c)$.\hfill (\emph{$I$ is directed})
\end{enumerate}
The collection $\I{\prec}$ of all ideals has the topology generated by basic open sets of the form $[a]_{\prec} = \{ I \in \I{\prec} \mid a \in I\}$.

A topological space is \emph{quasi-Polish} if it is homeomorphic to a space of the form $\I{\prec}$ for some transitive relation $\prec$ on $\IN$. A represented space is \emph{effectively quasi-Polish} if it is computably isomorphic to a space of the form $\I{\prec}$ for some computably enumerable (c.e.) transitive relation $\prec$ on $\IN$. An effectively quasi-Polish space is \emph{computably overt} if furthermore the corresponding relation $\prec$ is such that $E_\prec=\{ a\in\IN \mid [a]_\prec\not=\emptyset\}$ is c.e. A computably overt effective quasi-Polish space is called a \emph{computable quasi-Polish space}.
 
Quasi-Polish spaces were introduced in \cite{dbr}, and the equivalent characterization in terms of spaces of ideals that we use here was first shown in \cite{DPS}. Effective quasi-Polish spaces have been studied in \cite{Sel15,KK17,DPS,HRSS19,DKS24}.

An admissible representation \cite{Sch02} of $\I{\prec}$ represents each ideal by an enumeration of its elements. A function $f\colon \I{\prec_1} \to \I{\prec_2}$ is \emph{computable} iff there is a Type Two Turing Machine \cite{W00} which when given any enumeration of the elements of any $I\in \I{\prec_1}$ outputs an enumeration of $f(I) \in \I{\prec_2}$.

A \emph{c.e. open subset} (or \emph{$\lsigma 1$-subset}) of $\I{\prec}$ is an open subset of the form $\bigcup_{a\in S} [a]_{\prec}$ for some c.e. subset $S\subseteq \IN$. A $\lpi 2$-subset is a subset of the form
\[ \{ I \in \I{\prec} \mid (\forall i\in\IN)[ I \in U_i \Rightarrow I \in V_i]\}\]
for some c.e. sequence $(U_i, V_i)_{i\in\IN}$ of c.e. open subsets of $\I{\prec}$. Effective quasi-Polish spaces are closed under $\lpi 2$-subspaces (this is a well-known result, but see the proof of Theorem~3 in \cite{dbr20} for how to prove this in $\ACA$).

A pair $(\prec, X)$, where $\prec$ is a c.e. transitive relation and $X\subseteq \I{\prec}$,  is called a \emph{computable topological space}\footnote{See Section 4.2 of \cite{dbr20} for a proof that this definition is computably equivalent to the more common definition of a \emph{computable (or effective) topological space}.}. Computable functions, c.e. open subsets, and $\lpi 2$-subsets of computable topological spaces are defined by restricting the definition for effective quasi-Polish spaces to their subspaces. Computable topological spaces are countably based, and conversely every countably based space is homeomorphic to a computable topological space for some suitable $\prec$.

\subsection{Computable categories}

A \emph{computable category} is an internal category \cite{J99} in the category of computable topological spaces and computable functions. More concretely, a computable category is a tuple $\calC = (\calC_\Obj, \calC_\Mor, \src,\tar,\id,\circ)$ consisting of the following data:
\begin{itemize}
\item
$\calC_\Obj$ (objects) and $\calC_\Mor$ (morphisms) are computable topological spaces.
\item
$\src\colon \calC_\Mor\to\calC_\Obj$ (source), $\tar\colon \calC_\Mor\to\calC_\Obj$ (target), and $\id\colon \calC_\Obj\to\calC_\Mor$ (identity) are computable functions.
\item
$\circ:\subseteq \calC_\Mor\times\calC_\Mor\to\calC_\Mor$ (composition) is a partial computable function with domain
\[dom(\circ) = \{ \langle g,f \rangle \in \calC_\Mor\times\calC_\Mor \mid \src(g) = \tar(f)\}\]
\end{itemize}
subject to the following:
\begin{itemize}
\item
$\src(g\circ f) = \src(f)$ and $\tar(g\circ f) = \tar(g)$,
\item
$\src(\id(c)) = c$ and $\tar(\id(c)) = c$,
\item
$(h\circ g)\circ f = h \circ (g\circ f)$ when the compositions $h\circ g$ and $g\circ f$ are defined,
\item
if $\src(f) = c$ and $\tar(f) = d$ then $f\circ \id(c) = f = \id(d)\circ f $.
\end{itemize}
We write $f\colon c\to d$ to denote that $f$ is a morphism with  $\src(f) = c$ and $\tar(f) = d$.

We call $\calC$ an \emph{effective quasi-Polish category} if $\calC_\Obj$ and $\calC_\Mor$ are effective quasi-Polish spaces. This implies $\calC$ is an internal category in the category of effective quasi-Polish spaces and computable functions, because effective quasi-Polish spaces are closed under finite limits (in particular, the pullback $dom(\circ)$ is an effective quasi-Polish space). $\calC$ is \emph{locally compact} if $\calC_\Mor$ and $\calC_\Obj$ are locally compact spaces.

As a useful example, note that every computable topological space $X$ can be viewed as a computable category $\calC_X$ by defining $(\calC_X)_\Mor = (\calC_X)_\Obj = X$, and defining $\src$, $\tar$, and $\id$ to be the identity functions. The only morphisms in $\calC_X$ are the identity morphisms, so composition $\circ:\subseteq X\times X \to X$ is the (partial) inverse of the diagonal embedding $\Delta\colon X\to X\times X$, $x\mapsto \langle x,x\rangle$. Then $\calC_X$ is a discrete category, although not necessarily discrete in the topological sense.

\subsection{Computable functors}

Let $\calC= (\calC_\Obj, \calC_\Mor, \src_\calC,\tar_\calC,\id_\calC,\circ_\calC)$ and $\calD= (\calD_\Obj, \calD_\Mor, \src_\calD,\tar_\calD,\id_\calD,\circ_\calD)$ be computable categories. A \emph{computable functor} from $\calC$ to $\calD$ is a pair $F = (F_{\Obj}, F_{\Mor})$ of computable functions $F_{\Obj} \colon \calC_\Obj\to\calD_\Obj$ and $F_{\Mor} \colon \calC_\Mor\to\calD_\Mor$ satisfying
\begin{enumerate}
\item
$F_{\Obj}\circ \src_\calC = \src_\calD \circ F_{\Mor}$,
\item
$F_{\Obj}\circ \tar_\calC = \tar_\calD \circ F_{\Mor}$,
\item
$F_{\Mor}\circ \id_\calC = \id_\calD \circ F_{\Obj}$, and
\item
$F_{\Mor}(g\circ_\calC f) = F_{\Mor}(g) \circ_\calD F_{\Mor}(f)$ for all composable $f,g\in\calC_\Mor$.
\end{enumerate}
A computable functor $F$ from $\calC$ to $\calD$ will be denoted $F\colon \calC\to\calD$. It is clear that the identity functor $1_\calC\colon\calC\to\calC$ (which is the identity on objects and morphisms) is computable, and that computable functors are closed under composition.

Given computable functors $F,G\colon \calC\to\calD$, a \emph{computable natural transformation} from $F$ to $G$ is given by a computable function $\eta\colon \calC_\Obj \to \calD_\Mor$ satisfying:
\begin{itemize}
\item
$\src_{\calD}\circ\eta = F_{\Obj}$ and $\tar_{\calD}\circ \eta = G_{\Obj}$ 

(i.e., $\eta(c) \colon F_{\Obj}(c) \to G_{\Obj}(c)$ for each $c\in\calC_\Obj$), 
\item
 $\eta(\tar_\calC(f)) \circ_\calD F_{\Mor}(f) = G_{\Mor}(f) \circ_\calD \eta(\src_\calC(f))$ for each $f\in\calC_\Mor$.
\end{itemize}
Usually we will write $\eta\colon F\to G$ if it will not cause any confusion.

A \emph{continuous functor}\footnote{We use \emph{continuous} in the topological sense, which conflicts with the established usage of \emph{continuous functor} as a functor that preserves all small limits. This will not cause a problem because we do not use the latter notion in this note.} is a functor that is computable with respect to an oracle. Similarly, a \emph{continuous natural transformation} is a natural transformation that is computable with respect to an oracle. We will need these relativized versions  when defining functor categories in the next section. 

\subsection{Functor categories}

Since the category of computable topological spaces (or quasi-Polish spaces) is not cartesian closed, when we discuss functor categories we will potentially need to step into the larger cartesian closed category $\QCB$ \cite{Sch21}. 

The functor category $\calD^\calC$ has continuous functors from $\calC$ to $\calD$ as objects and continuous natural transformations as morphisms. It is formally defined as:
\begin{itemize}
\item
$(\calD^\calC)_\Obj$ is the subspace of $(\calD_\Obj)^{\calC_\Obj}\times (\calD_\Mor)^{\calC_\Mor}$ of continuous functors.
\item
$(\calD^\calC)_\Mor$ is the subspace of $(\calD_\Mor)^{\calC_\Obj} \times (\calD^\calC)_\Obj\times (\calD^\calC)_\Obj$ of tuples $\langle \eta, F,G\rangle$ where $\eta$ is a continuous natural transformation from $F$ to $G$.
\item
$\src_{\calD^\calC}(\langle \eta, F,G\rangle) = F$, $\tar_{\calD^\calC}(\langle \eta, F,G\rangle)=G$, $\id_{\calD^\calC}(F) =\langle \id_\calD\circ F_{\Obj}, F,F\rangle$.
\item
$\langle \theta, G,H\rangle \circ_{\calD^\calC} \langle \eta, F,G\rangle = \langle \zeta, F,H\rangle$, where $\zeta(c) = \theta(c) \circ_{\calD} \eta(c)$.
\end{itemize} 

The function spaces $(\calD_\Obj)^{\calC_\Obj}$ and $(\calD_\Mor)^{\calC_\Mor}$ are constructed as in $\QCB$, and the topology is the sequentialization of the compact-open topology. 

If $X$ and $Y$ are quasi-Polish, and $X$ is locally compact, then $Y^X$ is quasi-Polish and has the compact-open topology (see Theorem~16.3 of \cite{DGJL}). Therefore, if $\calC$ and $\calD$ are quasi-Polish categories and $\calC$ is locally compact, then $(\calD_\Obj)^{\calC_\Obj}$ and $(\calD_\Mor)^{\calC_\Mor}$ are quasi-Polish. Thus $\calD^\calC$ is a quasi-Polish category, because $(\calD^\calC)_\Obj$ and $(\calD^\calC)_\Mor$ can be constructed as equalizers of continuous functions between quasi-Polish spaces, and $\src_{\calD^\calC}$, $\tar_{\calD^\calC}$, and $\id_{\calD^\calC}$ are continuous.

\subsection{Computable \'{e}tale spaces}

An \emph{\'{e}tale space} over a topological space $Y$ is a local homeomorphism $p\colon X\to Y$, which means that each $x\in X$ has an open neighborhood $U$ such that the restriction $p|_U$ is a homeomorphism with an open subset of $Y$. A morphism between \'{e}tale spaces $p\colon X\to Y$ and $q\colon Z\to Y$ is a continuous function $f\colon X\to Z$ such that $q\circ f = p$. It is well-known that the category of \'{e}tale spaces over $Y$ is equivalent to the category of sheaves of sets over $Y$ (see \cite{MM92}).

We only consider countably based \'{e}tale spaces in this note. Having a countable basis forces the fibers to be countable, hence these can be viewed as sheaves of countable sets over a space. Note that a countably based \'{e}tale space over a quasi-Polish space is automatically quasi-Polish (Lemma~2.2 of \cite{Chen19}).

Let $X$ and $Y$ be computable topological spaces. A \emph{computable \'{e}tale space} consists of a computable function $p\colon X\to Y$ along with the following:
\begin{enumerate}
\item
a computable enumeration of c.e. open subsets $(U_n)_{n\in\IN}$ that cover $X$,
\item
a computable enumeration of c.e. open subsets $(V_n)_{n\in\IN}$ of $Y$,
\item
a computable enumeration of computable functions (sections) $s_n\colon V_n \to U_n$ ($n\in\IN$) satisfying 
\begin{itemize}
\item
$s_n(p(x)) = x$ for each $x\in U_n$, and
\item
$p(s_n(y)) = y$ for each $y\in V_n$.
\end{itemize}
(i.e., each $s_n$ is a computable inverse of the restriction $p|_{U_n}\colon U_n\to V_n$). 
\end{enumerate}
In this case, we call $p\colon X\to Y$ a \emph{computable local homeomorphism}. We will sometimes call $X$ a computable \'{e}tale space if omitting the computable local homeomorphism $p$ and the computable sections does not cause confusion.

We have the following easy fact that we write here as a lemma for convenience.

\begin{lemma}\label{lem:sections}
If $p\colon X\to Y$ is a local homeomorphism with sections $s_n\colon V_n \to U_n$  ($n\in\IN$)  and $p(x)=y$, then $x=s_n(y)$ if and only if $x\in U_n$.
\qed
\end{lemma}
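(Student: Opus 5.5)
The plan is to check the two implications separately, using only the two defining identities of the sections: $s_n(p(x')) = x'$ for every $x'\in U_n$, and $p(s_n(y')) = y'$ for every $y'\in V_n$. Throughout, fix $x\in X$ with $p(x)=y$. Since the lemma is purely set-theoretic, computability plays no role.

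For the direction ``if'', assume $x\in U_n$. The first identity, applied to $x' = x$, gives $s_n(p(x)) = x$. Substituting $p(x)=y$ yields $x = s_n(y)$. Implicit here is that $y = p(x) \in V_n$, which holds because $p|_{U_n}$ maps into $V_n$ (equivalently, $s_n$ is the inverse of $p|_{U_n}\colon U_n\to V_n$). So $s_n(y)$ is defined and the equation makes sense.

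For the direction ``only if'', assume $x = s_n(y)$. This presupposes that $y\in V_n$, the domain of $s_n$. Since $s_n$ takes values in $U_n$, we get $x = s_n(y)\in U_n$ directly. The second identity is not even needed here; it only confirms consistency, since $p(s_n(y)) = y = p(x)$.

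There is no real obstacle. The only point requiring care is the convention that ``$x = s_n(y)$'' includes the assertion that $y\in V_n$, so that $s_n(y)$ is defined. With that convention both directions are one-line consequences of the definitions, which is presumably why the paper states the lemma with $\qed$ and no written proof.
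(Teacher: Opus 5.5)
Your proof is correct and is the direct unwinding of the definitions that the paper intends when it states this ``easy fact'' without proof: $x\in U_n$ gives $x=s_n(p(x))=s_n(y)$, with $y\in V_n$ because $s_n$ inverts $p|_{U_n}\colon U_n\to V_n$; conversely, $x=s_n(y)$ lies in the codomain $U_n$ of $s_n$. Your remark that ``$x=s_n(y)$'' tacitly includes $y\in V_n$ is the right convention, and it matches how the lemma is applied later in the paper.
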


\subsection{Computable actions}

The definitions in this section are a minor generalization of the continuous groupoid actions in \cite{Chen19} to continuous actions by topological categories. 

Let $\calC$ be a computable category, and $p\colon X \to \calC_\Obj$ be a computable function from a computable topological space $X$ to the space of objects of $\calC$. A \emph{computable action} by $\calC$ on $X$ is a partial computable function $\alpha:\subseteq \calC_\Mor \times X \to X$ with domain
\[dom(\alpha) = \{\langle f,x\rangle \in \calC_\Mor \times X \mid \src(f) = p(x)\}\]
that satisfies the following conditions (where we abbreviate $\alpha(f,x)$ by $f\cdot x$):
\begin{enumerate}
\item
$\tar(f) = p(f\cdot x)$
\item
$\id(p(x)) \cdot x = x$
\item
$(g\circ f)\cdot x = g\cdot (f\cdot x)$.
\end{enumerate}

Intuitively, $\alpha$ interprets (in a computable way) each morphism $f\colon c \to d$ in $\calC$ as a function $\alpha(f,-) \colon p^{-1}(c) \to p^{-1}(d)$ between the fibers of $c$ and $d$, while preserving identity morphisms and composition.

A \emph{computable equivariant function} from $p\colon X \to \calC_\Obj$ to $q\colon Y \to \calC_\Obj$ (both equipped with computable actions by $\calC$) is a computable function $f\colon X\to Y$ such that $p = q\circ f$ and $f(g\cdot x) = g\cdot f(x)$.

If $\calC$ is a computable category, then a computable \'{e}tale space over $\calC_\Obj$ equipped with a computable action by $\calC$ is called a \emph{computable $\calC$-set}.

These definitions relativize to countably based \'{e}tale spaces, continuous actions, and continuous equivariant functions. We write $\act(\calC)$ for the category of countably based $\calC$-sets and continuous equivariant functions.

\subsection{The category of overt discrete quasi-Polish spaces}

Let $\calP(\IN)$ be the powerset of the natural numbers with the Scott-topology. If $(F_i)_{i\in\IN}$ is a computable enumeration of all finite subsets of $\IN$ and we define $i\prec j \iff F_i\subseteq F_j$, then $\calP(\IN)$ is computably homeomorphic to $\I{\prec}$. The space $\calP(\IN\times\IN)$ of sets of pairs of natural numbers is defined similarly, and is computably homeomorphic to $\calP(\IN)$.

A symmetric transitive relation is called a \emph{partial equivalence relation} \cite{J99}. The effective quasi-Polish category $\cntSets$ of overt discrete quasi-Polish spaces is defined as follows (compare with Section~4 of \cite{Chen19}):

\begin{itemize}
\item
$\cntSets_\Obj$ is the subspace of $\calP(\IN\times\IN)$ of all partial equivalence relations.
\item
$\cntSets_\Mor$ is the subspace of $\calP(\IN\times \IN) \times\cntSets_\Obj\times\cntSets_\Obj$ of tuples $\langle G, \equiv_\src, \equiv_\tar\rangle$  satisfying (for all $a,a',b,b'\in\IN$):
\begin{enumerate}
\item
$G(a,b)$ implies $a\equiv_\src a \,\&\, b\equiv_\tar b$.
\item
$G(a,b) \,\&\, a\equiv_\src a'$ implies $G(a',b)$.
\item
$G(a,b) \,\&\, b\equiv_\tar b'$ implies $G(a,b')$.
\item
$G(a,b) \,\&\, G(a,b')$ implies $b\equiv_\tar b'$.
\item
$a\equiv_\src a$ implies $(\exists b\in\IN)\, G(a,b)$.
\end{enumerate}
\item
$\src(\langle G, \equiv_\src, \equiv_\tar\rangle) = \equiv_\src$.
\item
$\tar(\langle G, \equiv_\src, \equiv_\tar\rangle) = \equiv_\tar$.
\item
$\id(\equiv)=\langle \equiv, \equiv, \equiv\rangle$.
\item
Composition $\circ$ is defined as 
\[\langle G', \equiv_\rho, \equiv_\tar\rangle \circ \langle G, \equiv_\src, \equiv_\rho\rangle = \langle \{\langle a,c\rangle\mid (\exists b\in\IN)\, G(a,b) \,\&\, G'(b,c)\} , \equiv_\src, \equiv_\tar\rangle,\]
which is easily seen to be well-defined and computable.
\end{itemize}

Clearly, $\cntSets_\Obj$ and $\cntSets_\Mor$ are effective quasi-Polish spaces because they are defined as $\lpi 2$-subspaces of effective quasi-Polish spaces.

An effective quasi-Polish space $X\cong \I{\prec}$ is \emph{computably discrete} if there is a c.e. set $D_\prec \subseteq \IN\times\IN$ such that $I = J$ in $\I{\prec}$ if and only if there exists $\langle a,b\rangle\in D_\prec$ with $a \in I$ and $b \in J$.

Let $\sierp=\{\bot,\top\}$ be the Sierpinski space. Then an effective quasi-Polish space $X$ is computably discrete if and only if equality $(=)\colon X\times X\to\sierp$, defined as ${=}(x,y)=\top \iff x=y$, is computable. Furthermore, $X$ is computably overt if and only if $(\exists_X)\colon \sierp^X\to\sierp$, defined as $\exists_X(U) =\top \iff U\not=\emptyset$, is computable. Therefore, our definitions are equivalent to the ones in \cite{ME04,P16}.

The next theorem is closely related to Theorem~1 of \cite{NPPV25}, which includes several other important results on overt discrete quasi-Polish spaces within the general context of represented spaces.

\begin{theorem}
$\cntSets$ is equivalent to the category of overt discrete quasi-Polish spaces. Under this equivalence, the computable points of $\cntSets_\Obj$ correspond to the computably overt computably discrete effective quasi-Polish spaces, and the computable points of $\cntSets_\Mor$ correspond to the computable functions between computably overt computably discrete effective quasi-Polish spaces.
\end{theorem}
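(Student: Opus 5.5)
We would define a functor $\Phi$ from $\cntSets$ to the category of overt discrete quasi-Polish spaces and continuous maps, and show that it is full, faithful and essentially surjective, with every step uniform in the data so that the effective claims follow. On objects, send a partial equivalence relation $\equiv$ to the quotient set $D_\equiv = \{a\in\IN \mid a\equiv a\}/{\equiv}$ with the discrete topology. This space is countable and discrete, hence Polish and so quasi-Polish, and it is overt because every discrete space is overt. To see it effectively, present $D_\equiv$ as $\I{\prec}$ with the empty relation $\prec$ restricted so that the ideals are exactly the singletons $\{a\}$ with $a\equiv a$; identifying equivalent indices is then handled by $D_\prec = \equiv$. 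A cleaner presentation is: ideals are the $\equiv$-classes, where $a\prec b \iff a\equiv b$. Transitivity of $\prec$ follows from symmetry and transitivity of $\equiv$, and the ideals of $\prec$ are exactly the nonempty classes $[a]$ with $a\equiv a$. Then $E_\prec=\{a\mid a\equiv a\}$ and $D_\prec={\equiv}$ are both c.e. relative to $\equiv$, so a computable $\equiv$ gives a computably overt, computably discrete effective quasi-Polish space.

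On morphisms, send $\langle G,\equiv_\src,\equiv_\tar\rangle$ to the function $[a]\mapsto[b]$ for any $b$ with $G(a,b)$. Conditions 2--5 say exactly that $G$ is the graph, saturated in both coordinates, of a total function between the quotients, and condition 1 keeps that graph inside the domains. Continuity is automatic for discrete spaces. Computability holds because, given an enumeration of the class $[a]$, we search $G$ for some $b$ with $G(a,b)$ and output the class of $b$, which is c.e. through $\equiv_\tar$. The identity and composition clauses then correspond to the identity function and to function composition, so $\Phi$ is a functor.

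For faithfulness, note that the tuple is recovered from the function, since $G=\{\langle a,b\rangle \mid f([a])=[b]\}$. For fullness, any map $f\colon D_{\equiv_\src}\to D_{\equiv_\tar}$ determines $G$ by the same formula, and $G$ satisfies 1--5. Effectively, if $f$ is computable, then $G(a,b)$ holds iff running $f$ on the class of $a$ produces an output containing some $b'$ with $b'\equiv_\tar b$, which is c.e. Hence $G$ is c.e. and the morphism is a computable point of $\cntSets_\Mor$.

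The main obstacle is essential surjectivity, together with its effective version. Take an overt discrete quasi-Polish space $X\cong\I{\prec}$ (computably overt and computably discrete in the effective case). Define $a\equiv b$ iff $a,b\in E_\prec$ and there is $\langle a',b'\rangle\in D_\prec$ with $[a]_\prec\cap[a']_\prec\neq\emptyset$ and $[b]_\prec\cap[b']_\prec\neq\emptyset$. The goal is that $a\equiv b$ holds iff $[a]_\prec$ and $[b]_\prec$ contain a common point. Discreteness lets us shrink each basic open to a singleton: overtness supplies the nonemptiness tests, and using $D_\prec$ with discreteness, the point $I$ is isolated, so some basic $[c]_\prec=\{I\}$. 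The care needed is that $\equiv$ must be computed from basic sets that are singletons. We would therefore define $\equiv$ on codes of pairs $\langle c,d\rangle\in D_\prec$ with $[c]_\prec\cap[d]_\prec\neq\emptyset$; this is c.e. by overtness, since $[c]\cap[d]$ is a c.e. open whose nonemptiness is semidecidable. Each such pair determines exactly one point, namely the unique $I=J$ with $c\in I$ and $d\in J$. Two codes are equivalent iff some finite chain of overlaps shows they name the same point, which is c.e. Finally, check that the map from $X$ to codes (search for $\langle c,d\rangle\in D_\prec$ with $c,d\in I$) and its inverse are computable, giving $X\cong D_\equiv$. Since all the constructions relativize, the topological statement follows by taking oracles.
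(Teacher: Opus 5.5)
Your proposal is correct and follows essentially the paper's route: send $\equiv$ to $\I{\equiv}$, let morphisms act through $G$, check fullness and faithfulness routinely, and get essential surjectivity by using $D_\prec$ with overtness to isolate singleton basic opens and identifying names of the same point. The only variation is that you use pairs $\langle c,d\rangle\in D_\prec$ with $[c]_\prec\cap[d]_\prec\neq\emptyset$ as names, whereas the paper uses the set $S$ of $c\in E_\prec$ lying $\prec$-above some pair in $D_\prec$; for your names overlap is already transitive, so no chains are needed, and the inverse map enumerates $\{b \mid [b]_\prec\cap[c]_\prec\neq\emptyset\}$. Delete your two false starts, since the versions you settle on are the ones that work: the singleton-ideal presentation gives the unquotiented set of indices (a witness set $D_\prec$ certifies equality but cannot identify distinct ideals), and your first candidate $\equiv$ amounts to $[a]_\prec\cap[b]_\prec\neq\emptyset$, which is not transitive.
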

\begin{proof}
We construct a full and faithful and essentially surjective functor $\calE$ from $\cntSets$ to the category of overt discrete quasi-Polish spaces. 
Given $\equiv$ in $\cntSets_\Obj$, define $\calE_\Obj(\equiv)=\I{\equiv}$, the space of ideals of $\equiv$.  This is quasi-Polish by definition, and discrete because if $I,J\in \calE_\Obj(\equiv)$ then $I = J$ if and only if $(\exists a\in \IN)[a\in I \,\&\, a\in J]$. Finally, $\calE_\Obj(\equiv)$ is overt (relative to $\equiv$) because $[a]_\prec\not = \emptyset$ if and only if $a\equiv a$. Clearly, if $\equiv$ is a computable point then $\calE_\Obj(\equiv)$ is a computably overt computably discrete effective quasi-Polish space.

$\calE_\Mor$ maps $\langle G, \equiv_\src, \equiv_\tar \rangle$ in $\cntSets_\Mor$ to the function $f_G\colon \calE_\Obj(\equiv_\src)\to \calE_\Obj(\equiv_\tar)$ defined as
\[f_G(I) = \{b\in \IN \mid (\exists a\in I)\, G(a,b)\}\]
for each $I\in \calE_\Obj(\equiv_\src)$. It follows from the definition of ideals and the five axioms defining elements of  $\cntSets_\Mor$ that $f_G(I)$ is an ideal of $\equiv_\tar$, hence $f_G$ is a well-defined function. Clearly, $f_G$ is computable from $\langle G, \equiv_\src, \equiv_\tar \rangle$. Note that a computable point of $\cntSets_\Mor$ has computable source and target because $\src$ and $\tar$ are computable.

It is easy to see that $\calE$ is a full and faithful functor. Next, we show that each computably overt computably discrete effective quasi-Polish space is computably homeomorphic to $\calE_\Obj(\equiv)$ for some computable point $\equiv$ of $\cntSets_\Obj$. 

Assume $\prec$ is a c.e. transitive relation, that $E_\prec=\{ a\in\IN \mid [a]_\prec\not=\emptyset\}$ is c.e., and that $D_\prec \subseteq \IN\times\IN$ is c.e. and such that $I = J$ in $\I{\prec}$ if and only if there is $\langle a,b\rangle\in D_\prec$ with $a \in I$ and $b \in J$. Without loss of generality, we can assume that $\langle a,b\rangle\in D_\prec$ implies $a\in E_\prec$ and $b\in E_\prec$.

Define
\[S = \{ c\in E_\prec \mid (\exists \langle a,b\rangle\in D_\prec)[a\prec c\,\&\, b\prec c]\}.\]  
Note that for each $c\in S$ there is a unique $I\in\I{\prec}$ such that $\{I\}=[c]_\prec$, and conversely for each $I\in\I{\prec}$ there is at least one $c\in S$ with $\{I\}=[c]_\prec$. If $[a]_\prec = [b]_\prec = \{I\}$ then $a$ and $b$ must have a $\prec$-upper bound in $I$, so by defining
\[a \equiv b \iff a,b\in S \,\&\, (\exists c\in S)[a\prec c\,\&\, b\prec c],\]
we obtain that $a\equiv b$ if and only if $a,b\in S$ and $[a]_\prec = [b]_\prec=\{I\}$ for a unique $I\in\I{\prec}$. Now define $g\colon \I{\prec}\to \calE_\Obj(\equiv)$ and $h\colon \calE_\Obj(\equiv)\to\I{\prec}$ as
\begin{eqnarray*}
g(I) &=& \{ a\in I \mid a \equiv a\}\\
h(J) &=& \{ b\in S\mid (\exists a\in J)\,b\prec a \}.
\end{eqnarray*}
Then $g(I)$ is the $\equiv$-ideal composed of all $a\in S$ with $[a]_\prec = \{I\}$. Furthermore, $h(J)$ is a $\prec$-ideal because each $\equiv$-ideal is $\prec$-directed. Therefore, $g$ and $h$ are well-defined, and easily seen to be computable inverses of each other. Therefore, $\calE_\Obj(\equiv)$ is computably homeomorphic to $\I{\prec}$. Since the proof relativizes, it follows that $\calE$ is essentially surjective.
\qed
\end{proof}

\section{Main result}

Our main result is the following:

\begin{theorem}
Let $\calC$ be a computable category. The category $\act(\calC)$ of countably based $\calC$-sets and continuous equivariant functions and the category $\cntSets^\calC$ of (topologically) continuous functors from $\calC$ to $\cntSets$ are equivalent. The equivalence preserves the computability of both objects and morphisms.
\qed
\end{theorem}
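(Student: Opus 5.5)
I would construct two functors, $\Phi\colon \act(\calC)\to\cntSets^\calC$ and $\Psi\colon \cntSets^\calC\to\act(\calC)$, each computable uniformly in the data (so that both relativize), and then give natural isomorphisms $\Psi\Phi\cong 1$ and $\Phi\Psi\cong 1$, also computable uniformly.

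\emph{From $\calC$-sets to functors.} Let $p\colon X\to\calC_\Obj$ be a $\calC$-set with sections $s_n\colon V_n\to U_n$. For $c\in\calC_\Obj$, I would encode the fiber $p^{-1}(c)$ by the partial equivalence relation
\[ n\equiv_c m \iff c\in V_n\cap V_m \,\&\, s_n(c)=s_m(c). \]
By Lemma~\ref{lem:sections}, $s_n(c)=s_m(c)$ holds iff $s_n(c)\in U_m$, which is c.e.\ in $c$, so $c\mapsto {\equiv_c}$ is computable into $\cntSets_\Obj$. Since the $U_n$ cover $X$, every point of the fiber is some $s_n(c)$, and therefore $\I{\equiv_c}$, which is discrete by the theorem on $\cntSets$, is in bijection with $p^{-1}(c)$. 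For a morphism $f\colon c\to d$, I would put $G_f(n,m)\iff c\in V_n \,\&\, d\in V_m \,\&\, f\cdot s_n(c)\in U_m$. This is c.e.\ in $f$ because $\alpha$, $\src$, $\tar$ and the $s_n$ are computable. Axioms 1--5 of $\cntSets_\Mor$ then follow from Lemma~\ref{lem:sections}, and functoriality follows from the action axioms. On morphisms, an equivariant $h\colon X\to Z$ is sent to $\eta(c)=\langle\{(n,m)\mid c\in V_n,\ h(s_n(c))\in U'_m\},\equiv_c,\equiv'_c\rangle$; naturality is exactly equivariance.

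\emph{From functors to $\calC$-sets.} Given a continuous functor $F$, I would set
\[ X_F=\{\langle c,I\rangle\in \calC_\Obj\times\calP(\IN)\mid I\in\I{F_\Obj(c)}\}, \]
which is a $\lpi 2$-type subspace and hence a computable topological space relative to $F$. Its projection is $p(c,I)=c$. For sections I would index by $n\in\IN$ and take $V_n=\{c\mid n\equiv_{F(c)} n\}$, which is c.e.\ open because $F_\Obj$ is continuous into the Scott topology. Then $U_n=\{\langle c,I\rangle\mid n\in I\}$ and $s_n(c)=\langle c,\{m\mid m\equiv_{F(c)}n\}\rangle$. The inverse identities hold because an ideal of a partial equivalence relation is exactly one equivalence class. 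The action is $f\cdot\langle c,I\rangle=\langle \tar(f), f_{G}(I)\rangle$ with $F_\Mor(f)=\langle G,\ldots\rangle$, as in $\calE_\Mor$. A natural transformation $\eta$ acts fiberwise by $f_{\eta(c)}$.

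\emph{Isomorphisms and the main obstacle.} $\Phi\Psi F\cong F$ holds essentially on the nose up to reindexing: the relation built from $\Psi F$ identifies $n,m$ exactly when $n\equiv_{F(c)}m$. The natural isomorphism $\Psi\Phi X\cong X$ sends $x$ to $\langle p(x),\{n\mid x\in U_n\}\rangle$, with inverse $\langle c,I\rangle\mapsto s_n(c)$ for any $n\in I$. I expect the main obstacle to be verifying that this inverse is a well-defined computable map, and in particular a homeomorphism onto $X$ rather than merely a bijection. The plan is to compute the output of $s_n(c)$ by searching for some $n\in I$, and to show that the choice of $n$ does not matter by Lemma~\ref{lem:sections}. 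Continuity on all of $X_F$ comes from the fact that the basic neighborhoods $U_n$ pull back to basic opens of $X_F$. Finally, I would note that every construction is uniform and relativizes, which gives both the equivalence of the continuous categories and the preservation of computable objects and morphisms.
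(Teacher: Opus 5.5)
Your proposal is correct and follows the paper's proof essentially step for step. Your $\Phi$ and $\Psi$ are the paper's $\calG$ and $\calF$ (your $X_F$ of pairs $\langle c,I\rangle$ with $I$ an ideal of $F_\Obj(c)$ is the paper's set of pairs $\langle c,[n]_c\rangle$), $\Phi\Psi$ is in fact literally the identity rather than merely up to reindexing, and $\Psi\Phi X\cong X$ is the paper's $\theta_X$ with inverse $\langle c,[n]_c\rangle\mapsto s_n(c)$. Two small points to tidy: continuity of that inverse follows from your search algorithm, or equivalently from the fact that it agrees with $s_n\circ p_F$ on the open sets $\{\langle c,I\rangle\mid n\in I\}$ covering $X_F$, rather than from pulling back the $U_n$ alone, since the $U_n$ need not form a basis of $X$; and you still need to check equivariance of $\theta_X$ and its naturality in $X$, which the paper verifies explicitly.
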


As a special case, if $X$ is a computable topological space then there is an equivalence between computable \'{e}tale spaces over $X$ and computable functions from $X$ to $\cntSets_\Obj$.  Thus, the functor category $\cntSets^{\calC_X}$ is equivalent to the category of countably based \'{e}tale spaces over $X$. 

We prove our main result in the next four sections by constructing functors $\calF\colon \cntSets^\calC \to \act(\calC)$ and $\calG\colon \act(\calC) \to \cntSets^\calC$ that preserve computable objects and morphisms, and then show that that $\calG\circ\calF$ and $\calF\circ\calG$ are naturally isomorphic to the identity functors.

\subsection{From $\cntSets^\calC$ to $\act(\calC)$}\label{subsec:functor2espace}

The next two subsections define the functor $\calF\colon \cntSets^\calC \to \act(\calC)$.

\subsection*{Objects (functors to $\calC$-sets)}

$\calF_\Obj$ maps a functor $F\colon \calC\to\cntSets$ to an \'{e}tale space $p_F\colon X_F\to\calC_\Obj$ equipped with an action $\alpha_F$ defined as follows.

Assume $F\colon \calC \to\cntSets$ is a computable functor. For $c\in\calC_\Obj$ we write $\equiv_c$ for the partial equivalence relation $F_{\Obj}(c)$, and for $n\in\IN$ we write $[n]_c=\{ m\in\IN\mid n \equiv_c m\}$ for the equivalence class of $n$ with respect to $\equiv_c$. Let $X_F\subseteq \calC_\Obj\times \calP(\IN)$ be the subspace defined as 
\[X_F = \{ \langle c, [n]_c \rangle \in \calC_\Obj \times \calP(\IN) \mid n \equiv_c n\}.\]
Define $p_F\colon X_F\to\calC_\Obj$ to be the projection on to the first coordinate. Define open sets
\begin{eqnarray*}
U_n &=& \{\langle c, [n]_c \rangle \in X_F\mid  n \equiv_c n \},\text{ and}\\
V_n &=& \{ c\in \calC_\Obj \mid n \equiv_c n \}.
\end{eqnarray*}
Define $s_n\colon V_n \to U_n$ as $s_n(c)=\langle c,[n]_c\rangle$. Then $s_n$ is the inverse for $p|_{U_n}$, which shows that $p$ is a computable local homeomorphism.

Define the action $\alpha_F:\subseteq \calC_\Mor \times X_F \to X_F$ as 
\[\alpha_F(f,\langle c, [n]_c \rangle) = \langle \tar(f), F_\Mor(f)(n)\rangle\]
where  for $F_\Mor(f) = \langle G_f, \equiv_\src, \equiv_\tar\rangle$ we set $F_\Mor(f)(n)=\{m\in\IN \mid G_f(n,m)\}$. Using the assumption that $F$ is a computable functor, it is straightforward to check that $\alpha_F$ is a computable action.

\subsection*{Morphisms (natural transformations to equivariant functions)}

$\calF_\Mor$ maps a natural transformation $\eta$ to an equivariant function $h$ as follows. 

Assume $F,F'\colon \calC \to\cntSets$ are computable functors, and that $\eta\colon \calC_\Obj \to \cntSets_\Mor$ corresponds to a computable natural transformation from $F$ to $F'$. Let $p\colon X \to \calC_\Obj$ and $q\colon Y \to \calC_\Obj$ be the $\calC$-sets $\calF_\Obj(F)$ and $\calF_\Obj(F')$, respectively (we omit the subscripts). Define $h\colon X\to Y$ as 
\[h(\langle c, [n]_c \rangle) = \langle c, \eta(c)(n) \rangle\]
where for $\eta(c) = \langle G, F_\Obj(c), F'_\Obj(c)\rangle$ we set $\eta(c)(n)=\{m\in\IN \mid G(n,m)\}$.

It is clear that $h$ is computable. We check that it is equivariant. Since $p$ and $q$ project the first coordinate and $h$ does not change it, it is clear that $p=q\circ h$. For $f\colon c\to d$ in $\calC_\Mor$, we have (with a slight abuse of notation in the second and third lines)
\begin{eqnarray*}
h(f\cdot \langle c, [n]_c \rangle) &=& h(\langle d, F_\Mor(f)(n)\rangle)\\
 &=& \langle d, \eta(d)(F_\Mor(f)(n))\rangle\\
  &=& \langle d, F'_\Mor(f)(\eta(c)(n))\rangle\\
   &=& f\cdot\langle c, \eta(c)(n)\rangle\\
  &=& f\cdot h(\langle c, [n]_c \rangle),
\end{eqnarray*}
hence $h$ is equivariant.

\subsection{From $\act(\calC)$ to $\cntSets^\calC$}\label{subsec:espace2functor}

The next two subsections define the functor  $\calG\colon \act(\calC) \to \cntSets^\calC$.

\subsection*{Objects ($\calC$-sets to functors)}

$\calG_\Obj$ maps an \'{e}tale space $p\colon X\to\calC_\Obj$ equipped with an action $\alpha$ to a functor $F\colon \calC\to\cntSets$ as follows.

Assume $p\colon X\to\calC_\Obj$ is a computable \'{e}tale space with sections $s_n\colon V_n \to U_n$ ($n\in\IN$), and computable action  $\alpha:\subseteq \calC_\Mor \times X \to X$. Define $F_\Obj\colon \calC_\Obj\to \cntSets_\Obj$ as
\[F_\Obj(c) = \{\langle n,m\rangle \in \IN\times\IN \mid c\in V_n\cap V_m \,\&\, s_n(c)=s_m(c)\}\]
and define $F_\Mor\colon\calC_\Mor\to \cntSets_\Mor$ as
\[F_\Mor(f)=\langle G_f, \equiv_\src,\equiv_\tar\rangle,\]
where
\begin{eqnarray*}
G_f &=& \{ \langle n,m\rangle \in\IN\times\IN \mid n \equiv_\src n \,\&\, m \equiv_\tar m \,\&\, \alpha(f,s_n(\src(f))) = s_m(\tar(f))\}\\
\equiv_\src &=& F_\Obj(\src(f))\\
\equiv_\tar &=& F_\Obj(\tar(f)).
\end{eqnarray*}

We show that $F\colon \calC\to\cntSets$ is a computable functor. It is clear that $F_\Obj(x)$ is a partial equivalence relation, hence $F_\Obj\colon \calC_\Obj\to \cntSets_\Obj$ is well-defined and easily seen to be computable by using Lemma~\ref{lem:sections}.

$F_\Mor\colon \calC_\Mor\to \cntSets_\Mor$ is also easily seen to be computable by using Lemma~\ref{lem:sections} and the fact that $\alpha$ being an action implies $p(\alpha(f,s_n(\src(f)))) = \tar(f)$, but we must check that $F_\Mor(f)$ satisfies the 5 conditions to be in $\cntSets_\Mor$. Condition 1 holds by definition. For condition 2, if $G_f(n,m)$ and $n\equiv_\src n'$, then $s_n(\src(f)) = s_{n'}(\src(f))$ by definition of $\equiv_\src$, hence $G_f(n',m)$ follows. Condition 3 is similar. For condition 4, if $G_f(n,m)$ and $G_f(n,m')$ then $\tar(f) \in V_m \cap V_{m'}$ and $s_m(\tar(f)) = s_{m'}(\tar(f))$ because $\alpha$ is single-valued, hence $m\equiv_\tar m'$. For condition 5, if $n\equiv_\src n$, then $\src(f) \in V_n$, hence $p(s_n(\src(f))) = \src(f)$, thus $\langle f, s_n(\src(f))\rangle \in dom(\alpha)$. Choose $m\in\IN$ with $\alpha( f, s_n(\src(f)))\in U_m$. Then $\tar(f)=p(\alpha( f, s_n(\src(f))))$ because $\alpha$ is an action. Thus $\tar(f)\in V_m$, which implies $m \equiv_\tar m$, and it follows from Lemma~\ref{lem:sections} that $\alpha(f,s_n(\src(f))) = s_m(\tar(f))$. Therefore, $G_f(n,m)$, and condition 5 is satisfied. 

Finally, it is straightforward to verify that $F_\Obj$ and $F_\Mor$ together satisfy the 4 conditions required to be a functor: the first two conditions hold by definition, and the last two follow from the properties of actions.

\subsection*{Morphisms (equivariant functions to natural transformations)}

$\calG_\Mor$ maps an equivariant function $h$ to a natural transformation $\eta_h$ as follows. 

Assume $h\colon X\to Y$ is a computable equivariant function from $p\colon X \to \calC_\Obj$ to $q\colon Y \to \calC_\Obj$. Let $s_n\colon V_n \to U_n$ ($n\in\IN$) be the sections of $p$, and $s'_n\colon V'_n \to U'_n$ the sections of $q$. Let $F, F'\colon \calC\to\cntSets$ be the computable functors corresponding to $p$ and $q$, respectively.

Define $\eta_h \colon\calC_\Obj \to \cntSets_\Mor$ as 
\[\eta_h(c) = \langle G,\equiv_\src,\equiv_\tar\rangle\]
where
\begin{itemize}
\item
$G=\{ \langle n,m\rangle \in\IN\times\IN \mid n \equiv_\src n \,\&\, m \equiv_\tar m \,\&\, h(s_n(c)) = s'_m(c)\}$
\item
$\equiv_\src = F_\Obj(c)$
\item
$\equiv_\tar = F'_\Obj(c)$
\end{itemize}
It is straightforward to see that $\eta_h$ is well-defined, and since $q(h(s_n(c)))=p(s_n(c)) = c$, Lemma~\ref{lem:sections} implies $h(s_n(c)) = s'_m(c)$ if and only if $h(s_n(c))\in U'_m$, hence $\eta_h$ is computable. 

We check that it is in fact a natural transformation $\eta_h \colon F \to F'$. For $f\colon c\to d$ in  $\calC_\Mor$,
\begin{eqnarray*}
&&\eta_h(d)\circ F_\Mor(f) \\
&=&\langle \{\langle n,m\rangle \mid (\exists k) [\cdots]\,\&\, f\cdot s_n(c) = s_k(d) \,\&\, h(s_k(d)) = s'_m(d) \}, F_\Obj(c), F'_\Obj(d)\rangle\\
 &=&\langle \{\langle n,m\rangle \mid [\cdots]\,\&\, h(f\cdot s_n(c)) = s'_m(d) \}, F_\Obj(c), F'_\Obj(d)\rangle\\
  &=&\langle \{\langle n,m\rangle \mid [\cdots]\,\&\, f\cdot h(s_n(c)) = s'_m(d) \}, F_\Obj(c), F'_\Obj(d)\rangle\\
&=& \langle \{\langle n,m\rangle \mid (\exists k) [\cdots]\,\&\, h(s_n(c)) = s'_k(c) \,\&\, f\cdot s'_k(c) = s'_m(d) \}, F_\Obj(c), F'_\Obj(d)\rangle\\
&=& F'_\Mor(f) \circ \eta_h(c)
\end{eqnarray*}
where $[\cdots]$ abbreviates $n\equiv n$, $m\equiv m$, $k\equiv k$ (when relevant, and with the appropriate subscripts). The other condition for being  a natural transformation is trivial.

\subsection{Natural isomorphism (from $\cntSets^\calC$ and back)}

We show that $\calG\circ\calF$ is actually equal to the identity functor on $\cntSets^\calC$.

Assume $F\colon \calC \to\cntSets$ is a computable functor, and let $\widehat{F} = \calG_\Obj(\calF_\Obj(F))$. Then $\widehat{F}_\Obj\colon \calC_\Obj\to \cntSets_\Obj$ is as follows:
\begin{eqnarray*}
\widehat{F}_\Obj(c) &=& \{\langle n,m\rangle \in \IN\times\IN \mid c\in V_n\cap V_m \,\&\, s_n(c)=s_m(c)\}\\
&=& \{\langle n,m\rangle \in \IN\times\IN \mid n \equiv_c n \,\&\, m \equiv_c m  \,\&\, \langle c,[n]_c\rangle=\langle c,[m]_c\rangle\}\\
&=& \{\langle n,m\rangle \in \IN\times\IN \mid n \equiv_c m \}\\
&=& F_{\Obj}(c),
\end{eqnarray*}
where $\equiv_c$ and $[\cdot]_c$ refer to $F_\Obj(c),$ and $s_n\colon V_n \to U_n$ are the sections of $\calF_\Obj(F)$.

For morphisms, assuming $F_\Mor(f) = \langle G_f, \equiv_\src, \equiv_\tar\rangle$, we have that $\widehat{F}_\Mor(f) =\langle \widehat{G}_f, \equiv_\src,\equiv_\tar\rangle$, where $\equiv_\src = F_\Obj(\src(f)) = \widehat{F}_\Obj(\src(f))$, and $\equiv_\tar = F_\Obj(\tar(f)) =  \widehat{F}_\Obj(\tar(f))$. Furthermore, assuming $n \equiv_\src n$ and $m \equiv_\tar m$, we have
\begin{eqnarray*}
\widehat{G}_f(n,m) &\iff& \alpha_F(f,s_n(\src(f))) = s_m(\tar(f))\\
&\iff& \alpha_F(f,\langle \src(f),[n]_{\src(f)}\rangle) = \langle \tar(f),[m]_{\tar(f)}\rangle \\
&\iff& \langle \tar(f), F_\Mor(f)(n)\rangle= \langle \tar(f),[m]_{\tar(f)}\rangle \\
&\iff& F_\Mor(f)(n)= [m]_{\tar(f)} \\
&\iff& G_f(n,m),
\end{eqnarray*}
where $\alpha_F$ is the action of $\calF_\Obj(F)$. Therefore, $\widehat{F} = F$.

Next, assume $F,F'\colon \calC \to\cntSets$ are functors and that $\eta\colon \calC_\Obj \to \cntSets_\Mor$ corresponds to a natural transformation from $F$ to $F'$. Let $\widehat{\eta} = \calG_\Mor(\calF_\Mor(\eta))$. Fix $c\in \calC_\Obj$ and assume $\eta(c) = \langle G, F_\Obj(c), F'_\Obj(c)\rangle$ and $h = \calF_\Mor(\eta)$. Then $\widehat{\eta}(c) = \langle \widehat{G}, F_\Obj(c), F'_\Obj(c)\rangle$, where for $\langle n,n\rangle \in F_\Obj(c)$ and $\langle m,m\rangle \in F'_\Obj(c)$, we have
\begin{eqnarray*}
\widehat{G}(n,m) &\iff& h(s_n(c)) = s'_m(c)\\
&\iff& h(\langle c,[n]_c\rangle) = \langle c,[m]'_c\rangle\\
&\iff& \langle c, \eta(c)(n)\rangle = \langle c,[m]'_c\rangle\\
&\iff& \eta(c)(n) = [m]'_c\\
&\iff& \{ b\in\IN \mid G(n,b)\} = \{ b\in\IN \mid b \equiv'_c m\}\\
&\iff& G(n,m),
\end{eqnarray*}
where $s_n$ refers to $\calF_\Obj(F)$, and $\equiv'_c$, $[\cdot]'_c$, and $s'_m$ refer to $\calF_\Obj(F')$. Thus, $\widehat{\eta}=\eta$.

\subsection{Natural isomorphism (from $\act(\calC)$ and back)}

We show that $\calF\circ\calG$ is naturally isomorphic to the identity functor on $\act(\calC)$.

Assume $p\colon X\to\calC_\Obj$ is a computable \'{e}tale space with sections $s_n\colon V_n \to U_n$ ($n\in\IN$), and computable action  $\alpha:\subseteq \calC_\Mor \times X \to X$. Let $F\colon \calC\to\cntSets$ be the functor obtained from applying $\calG_\Obj$ to $X$ (see Section~\ref{subsec:espace2functor}).

We calculate the \'{e}tale space $p_F\colon X_F\to\calC_\Obj$ with action $\alpha_F$ corresponding to $\calF_\Obj(F)$ (see Section~\ref{subsec:functor2espace}). We have 
\begin{eqnarray*}
X_F &=& \{ \langle c, [n]_c \rangle \in \calC_\Obj \times \calP(\IN) \mid n \equiv_c n \}\\
&=& \{ \langle c, [n]_c \rangle \in \calC_\Obj \times \calP(\IN) \mid c\in V_n\},
\end{eqnarray*}
where for $c\in V_n$,
\begin{eqnarray*}
[n]_c &=&\{ m\in\IN\mid n \equiv_c m\}\\
&=&\{ m\in\IN\mid  c\in V_m \,\&\, s_n(c)=s_m(c)\}\\
&=&\{ m\in\IN\mid  s_n(c) \in U_m\}
\end{eqnarray*}
and $p_F\colon X_F\to\calC_\Obj$ is the projection on to the first coordinate.

After unwinding the definitions, the action $\alpha_F:\subseteq \calC_\Mor \times X_F \to X_F$ is
\begin{eqnarray*}
\alpha_F(f,\langle c, [n]_c \rangle) &=& \langle \tar(f), \{m'\in\IN \mid \tar(f)\in V_{m'} \,\&\, \alpha(f,s_n(c)) = s_{m'}(\tar(f))\} \rangle\\
&=&\langle \tar(f),\{m'\in\IN \mid \alpha(f,s_n(c)) \in U_{m'}\}\rangle\\
&=&\langle \tar(f),\{m'\in\IN \mid s_{m}(p(\alpha(f,s_n(c)))) \in U_{m'}\}\rangle\\
&=&\langle \tar(f),\{m'\in\IN \mid s_m(\tar(f)) \in U_{m'}\}\rangle\\
&=&\langle \tar(f),[m]_{\tar(f)}\}\rangle
\end{eqnarray*}
where by assumption $\src(f) = c$ and $c\in V_n$, and $m\in\IN$ is any number satisfying $\alpha(f,s_n(c)) \in U_m$.

Define $\theta_X\colon X \to X_F$ as $\theta_X(x) = \langle p(x), [n]_{p(x)} \rangle$, where $n\in \IN$ satisfies $x\in U_n$ (if $x \in U_n \cap U_m$ then $[n]_{p(x)} = [m]_{p(x)}$ so the definition of $\theta_X(x)$ does not depend on the choice of $n$). It is clear that $\theta_X$ is computable, and it is equivariant because
\begin{eqnarray*}
\theta_X(\alpha(f,x)) &=& \langle p(\alpha(f,x)), [m]_{p(\alpha(f,x))} \rangle\quad \text{ (where $m\in\IN$ satisfies $\alpha(f,x) \in U_m$)}\\
  &=& \langle \tau(f), [m]_{\tau(f)} \rangle\\
  &=& \alpha_F(f,\langle p(x), [n]_{p(x)} \rangle)\\
  && \quad \text{ (as shown above, because $\src(f)=p(x)$ and $p(x)\in V_n$)}\\
  &=& \alpha_F(f,\theta_X(x)).
\end{eqnarray*}

Next, define $\theta_X'\colon X_F \to X$ as $\theta_X'(\langle c,[n]_c\rangle) = s_n(c)$. Note that $\theta_X'$ is well-defined because $s_n(c)=s_m(c)$ for each $m\in [n]_c$. It is clear that $\theta_X'$ is computable, and it is equivariant because:
\begin{eqnarray*}
\theta_X'(\alpha_F(f,\langle c, [n]_c \rangle)) &=& \theta_X'(\langle \tar(f),[m]_{\tar(f)}\rangle)\\
&& \quad \text{ (where $m\in\IN$ satisfies $\alpha(f,s_n(c)) \in U_m$)}\\
&=& s_m(\tar(f))\\
&=& \alpha(f,s_n(c)) \quad\text{ (by Lemma~\ref{lem:sections} because $\alpha(f,s_m(c)) \in U_m$)}\\
&=& \alpha(f,\theta_X'(\langle c, [n]_c \rangle)).
\end{eqnarray*}
Furthermore, $\theta_X$ and $\theta_X'$ are inverses because:
\begin{eqnarray*}
\theta_X'(\theta_X(x)) &=& \theta_X'(\langle p(x), [n]_{p(x)} \rangle)\quad\text{ (where $n\in\IN$ satisfies $x \in U_n$)}\\
 &=& s_n(p(x))\\
 &=& x,
\end{eqnarray*}
and
\begin{eqnarray*}
\theta_X(\theta_X'(\langle c,[n]_c\rangle)) &=& \theta_X(s_n(c))\\
&=& \langle p(s_n(c)), [n]_{p(s_n(c))} \rangle\quad\text{ (because $s_n(c) \in U_n$)}\\
&=& \langle c, [n]_{c} \rangle.
\end{eqnarray*}
Therefore, $X$ and $X_F$ are computably isomorphic $\calC$-sets.

Finally, we prove that $\theta\colon 1_{\act(\calC)}\to \calF\circ\calG$ is a natural transformation (it automatically follows that the inverse $\theta'$ is also natural). Assume $h\colon X\to Y$ is a continuous equivariant function from $p\colon X\to\calC_\Obj$ to $q\colon Y\to\calC_\Obj$. Let $s_n\colon V_n \to U_n$ ($n\in\IN$) be the sections of $p$, and $s'_n\colon V'_n \to U'_n$ the sections of $q$. We write $\eta_h \colon F \to F'$ for $\calG_\Mor(h)$ and $h' \colon X_F \to Y_{F'}$ for $\calF_\Mor(\eta_h)$. Then for $c\in\calC_\Obj$ with $c\in V_n$ we have
\[h'(\langle c, [n]_c \rangle) = \langle c, \eta_h(c)(n) \rangle\]
where 
\begin{eqnarray*}
m \in \eta_h(c)(n) &\iff& \langle n,n\rangle \in  F_\Obj(c) \,\&\, \langle m,m\rangle \in  F'_\Obj(c) \,\&\, h(s_n(c)) = s'_m(c)\\
&\iff& c \in V_n \cap V'_m \,\&\, h(s_n(c)) = s'_m(c)\\
&& \qquad \text{ (by definition of $F_\Obj$ and $F'_\Obj$)}\\
&\iff& c \in V_n\cap V'_m \,\&\, h(s_n(c)) \in U'_m\\
&& \qquad \text{ (by Lemma~\ref{lem:sections} because $q(h(s_n(c))) = c$)}.
\end{eqnarray*}

Assume $x\in X$, and fix $n \in \IN$ with $x\in U_n$. Since $s_n(p(x))=x$, if $m\in\eta_h(p(x))(n)$ then $h(x)=h(s_n(p(x))) \in U'_m$. Conversely, if $h(x) \in U'_m$, then since $p(x) \in V_n$ and $p(x)=q(h(x)) \in V'_m$ and $h(s_n(p(x))) = h(x) \in U'_m$, it follows that $m \in \eta_h(p(x))(n)$. Therefore, $m\in \eta_h(p(x))(n)$ if and only if $h(x)\in U'_m$. 

Now fix any $m\in\IN$ with $h(x)\in U'_m$. Since $s'_m(p(x)) = s'_m(q(h(x))) = h(x)$, we have $m' \in [m]'_{p(x)}$ if and only if $h(x)=s'_m(p(x)) \in U'_{m'}$.

It follows that $\eta_h(p(x))(n) =  [m]'_{p(x)}$, hence
\begin{eqnarray*}
h'(\theta_X(x)) &=& h'(\langle p(x), [n]_{p(x)} \rangle)\\
&=& \langle p(x), \eta_h(p(x))(n) \rangle\\
&=& \langle p(x), [m]'_{p(x)} \rangle\\
&=& \langle q(h(x)), [m]'_{q(h(x))} \rangle\\
&=& \theta_Y(h(x)). 
\end{eqnarray*}
Therefore, $\theta\colon 1_{\act(\calC)} \to \calF\circ\calG$ is a natural isomorphism.


\bibliographystyle{amsplain}
\bibliography{myrefs}

\end{document}